\newcommand{\PI}{3.14159}
\newtheorem{theorem}{Theorem}[section]
\newtheorem{proposition}[theorem]{Proposition}
\newtheorem{conjecture}[theorem]{Conjecture}
\newtheorem{question}[theorem]{Question}
\theoremstyle{definition}
\newtheorem{definition}[theorem]{Definition}
\newtheorem{remark}[theorem]{Remark}
\newtheorem{example}[theorem]{Example}
\DeclareMathOperator{\flag}{fdes}
\begin{document}
\title[Flag descents and eulerian polynomials]{Flag descents and Eulerian polynomials for wreath product quotients}
\author{Dustin Hedmark}
\address{Department of Mathematics\\
         University of Kentucky\\
         Lexington, KY 40506--0027}
\email{{dustin.hedmark@uky.edu}}

\author{Cyrus Hettle} 
\address{Department of Mathematics\\
         University of Kentucky\\
         Lexington, KY 40506--0027}
\email{\href{mailto:cyrus.h@uky.edu}{cyrus.h@uky.edu}}

\author{McCabe Olsen}
\address{Department of Mathematics\\
         University of Kentucky\\
         Lexington, KY 40506--0027}
\email{\href{mailto:mccabe.olsen@uky.edu}{mccabe.olsen@uky.edu}}
\date{\today}
\subjclass[2010]{Primary: 05A05, 05A19 Secondary: 05E15}
\thanks{The third author was partially supported by NSF-OEIS 1613525.}
\begin{abstract}
We investigate the $\alpha$-colored Eulerian polynomials and a notion of descents introduced in a recent paper of Hedmark and show that such polynomials can be computed as a polynomial encoding descents  computed over a quotient of the wreath product $\mathbb{Z}_\alpha\wr\mathfrak{S}_n$. Moreover, we consider the flag descent statistic computed over this same quotient and find that the flag Eulerian polynomial remains palindromic. 
We prove that the flag descent polynomial is palindromic over this same quotient by giving a combinatorial proof that the flag descent statistic is symmetrically distributed over the collection of colored permutations with fixed last color by way of a new combinatorial tool, the colored winding number of a colored permutation. We conclude with some conjectures, observations, and open questions. 
\end{abstract}

\maketitle
\tableofcontents
\section{Introduction}
Given $\alpha\in \mathbb{Z}_{>0}$, let $\mathbb{Z}_\alpha$ denote the cyclic group of order $\alpha$. Let $\mathfrak{S}_n$ be the symmetric group on $n$~elements. Recall that the \emph{Eulerian polynomial} is the polynomial
\[
A_n(x)=\sum_{\pi\in \mathfrak{S}_n}x^{{\rm des}(\pi)}
\]
where ${\rm des}(\pi)=|\{i \, : \, \pi_i>\pi_{i+1} \}|$. This polynomial also arises as the $h$-polynomial of the permutahedron~\cite{Petersen}.

In a recent paper~\cite{hedmark}, the first author introduced a  new polytopal complex called the \emph{$\alpha$-colored permutahedron} $P^\alpha_n$, as well as a new notion of descent for colored permutations in the wreath product $\mathbb{Z}_\alpha\wr\mathfrak{S}_n$. The \emph{$\alpha$-colored Eulerian polynomials} $A^\alpha_n(x)$ are a generalization of the usual Eulerian polynomial using these descents computed over  colored permutations with fixed last color. These polynomials are equivalently defined as the $h$-polynomial of $P^\alpha_n$. 


In this paper, we extend the work of~\cite{hedmark} by providing a new algebraic interpretation of~$A_n^\alpha$. In particular, we show that these polynomials arise by computing descent polynomials over the quotient $\mathbb{Z}_\alpha\wr \mathfrak{S}_n/H$, where $H$ is an appropriately chosen subgroup of order $\alpha$. Moreover, we consider additional descent statistics computed over the quotient $\mathbb{Z}_\alpha\wr\mathfrak{S}_n/H$, namely the \emph{flag descent statistic}, originally introduced for the  hyperoctahedral group $B_n\cong \mathbb{Z}_2\wr \mathfrak{S}_n$ in~\cite{hyperoctahedral} and generalized for $\mathbb{Z}_\alpha\wr \mathfrak{S}_n$ for $\alpha\geq 2$ in~\cite{bagnobiagoli}. We give a combinatorial proof that the flag descent statistic is symmetrically distributed over coset representatives of $\mathbb{Z}_\alpha\wr\mathfrak{S}_n/H$ by introducing the notion of the \emph{colored winding number} of a colored permutation, defined in Section~\ref{section_flag}.

We end the paper with a collection of conjectures, observations and open questions. Specifically, we make an overarching conjecture of unimodality, as well as more specific conjectures and observations for the particular case of $\alpha=2$ and $n=2k+1$, citing connections to discrete geometry and other enumerative methods. Finally, we state three tractable open questions towards showing the conjecture. 


\section{Generalized symmetric groups and  
colored Eulerian polynomials}
\label{generalized_perm_groups}
The \emph{generalized symmetric group} is the wreath product $\mathbb{Z}_\alpha\wr\mathfrak{S}_n$, or the group given by the semidirect product $(\mathbb{Z}_\alpha)^n\ltimes\mathfrak{S}_n$. Note that if $\alpha=1$ this coincides with the usual symmetric group. We denote an element of the generalized symmetric group by $w=w_1^{c_1}w_2^{c_2}\cdots w_n^{c_n}$, where $w_1w_2\cdots w_n$ is in $\mathfrak{S}_n$ and each color $c_i \in \mathbb{Z}_\alpha$. We now record the action of the generalized symmetric group in a definition.

\begin{definition}
\label{grp_action}
Let $w=w_1^{c_1}w_2^{c_2}\cdots w_n^{c_n}$ and $y=y_1^{d_1}y_2^{d_2}\cdots y_n^{d_n}$ in $\mathbb{Z}_\alpha\wr\mathfrak{S}_n$. Their product is
$$w_1^{c_1}w_2^{c_2}\cdots w_n^{c_n}\cdot y_1^{d_1}y_2^{d_2}\cdots y_n^{d_n}=w_{y_1}^{d_{1}+c_{y_1}}w_{y_2}^{d_2+c_{y_{2}}}\cdots w_{y_n}^{d_n+c_{y_{n}}},$$
where the exponents are taken modulo $\alpha$. In other words, we multiply the underlying permutations and assign the $i$th color by adding $d_i$ to the action of $y$ on $c_i$.
\end{definition}
\begin{remark}
\label{matrix}
The group multiplication in  $\mathbb{Z}_{\alpha}\wr\mathfrak{S}_n$ coincides with the multiplication of permutation matrices. Explicitly, using the colored permutation $w$ of Definition~\ref{grp_action}, we have that $w$ corresponds to a permutation matrix $A$ with $a_{i,j}=\zeta^{c_j}$ if $w_j=i$, for $\zeta=e^{2\pi i/\alpha}$ a root of unity, and $0$ otherwise.
\end{remark}

We will now show that the $\alpha$-colored Eulerian polynomials of ~\cite{hedmark} can be computed over the quotient $\mathbb{Z}_\alpha\wr\mathfrak{S}_n/H$, where $H$ the cyclic subgroup generated by $w=1^{1}2^1\cdots n^1$, that is, the identity permutation with all colors $1$. We recall the following definitions from~\cite{hedmark}.
\begin{definition}
Let $\mathbb{Z}_\alpha\wr\mathfrak{S}^{\beta}_n$ be the collection of colored permutations in $\mathbb{Z}\wr\mathfrak{S}_n$ with fixed last color $c_{n}=\beta$.
\end{definition}
\begin{definition}
Given  $w=w_1^{c_1}w_2^{c_2}\cdots w_n^{c_n}\in \mathbb{Z}_\alpha\wr\mathfrak{S}^{\beta}_n$, let the colored descent set $D(w)$ be $\{i \, : c_i\neq c_{i+1} \mbox{ or } c_i=c_{i+1} \mbox{ and }  w_i>w_{i+1}\}$ and denote the number of colored descents $|D(w)|$ by $d(w)$. 
The $\alpha$-colored Eulerian polynomials are defined as 
	\[
	A_n^{\alpha}(x)=\sum_{w\in\mathbb{Z}_\alpha\wr\mathfrak{S}^{\beta}_n}x^{d(w)}.
    \]
\end{definition}
By convention, we will assume that $\beta=0$, as the definition assigns no ordering to the colors. In the case of $\alpha=1$, this is the classical Eulerian polynomial $A_n(x)$.

Note that this notion of colored descent differs from those given by Steingr\'{i}mmson in~\cite{Steingrimmson_indexed} and by Ehrenborg and Readdy in~\cite{ER_r-cubical}.

We now show that $A^\alpha_n$ arises algebraically as follows. 

\begin{proposition}
The $\alpha$-colored Eulerian polynomial can be computed as
	\[
	A_n^{\alpha}(x)=\sum_{w\in\mathbb{Z}_\alpha\wr\mathfrak{S}_n/H}x^{d(w)}
    \]
where $H$ is the cyclic subgroup generated by $w=1^{1}2^1\cdots n^1$ in $\mathbb{Z}_{\alpha}\wr\mathfrak{S}_n$.  
\end{proposition}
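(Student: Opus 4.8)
The plan is to exhibit an explicit bijection between the two index sets that preserves the descent statistic $d$. I need to compare $\mathbb{Z}_\alpha\wr\mathfrak{S}_n^\beta$ (colored permutations with fixed last color $\beta$, which by convention is $0$) and the coset space $\mathbb{Z}_\alpha\wr\mathfrak{S}_n/H$, where $H=\langle 1^12^1\cdots n^1\rangle$ is cyclic of order $\alpha$.

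First I would understand how the generator of $H$ acts by right multiplication. Using Definition~\ref{grp_action}, multiplying an arbitrary $w=w_1^{c_1}\cdots w_n^{c_n}$ on the right by $g=1^12^1\cdots n^1$ (so $y_i=i$ and $d_i=1$ for all $i$) yields $w_1^{c_1+1}w_2^{c_2+1}\cdots w_n^{c_n+1}$; that is, right multiplication by the generator of $H$ fixes the underlying permutation and increments every color by $1$ modulo $\alpha$. Consequently each right coset $wH$ consists of exactly the $\alpha$ colored permutations sharing the underlying permutation $w_1\cdots w_n$ and whose color vectors $(c_1,\dots,c_n)$ differ by a constant shift. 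Since the color differences $c_i-c_{i+1}\pmod\alpha$ are invariant under a global color shift, and the underlying permutation is unchanged, the colored descent set $D(w)$ from its definition depends only on the coset, so $d$ is well-defined on $\mathbb{Z}_\alpha\wr\mathfrak{S}_n/H$ and the right-hand sum makes sense.

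Next I would choose a canonical coset representative: within each coset $wH$ there is exactly one element with $c_n=0$, obtained by subtracting $c_n$ from every color. This gives a bijection $\mathbb{Z}_\alpha\wr\mathfrak{S}_n/H \to \mathbb{Z}_\alpha\wr\mathfrak{S}_n^0$. Because $d$ is constant on cosets and the representative lies in $\mathbb{Z}_\alpha\wr\mathfrak{S}_n^0$, the bijection is statistic-preserving, so
\[
\sum_{w\in\mathbb{Z}_\alpha\wr\mathfrak{S}_n/H}x^{d(w)}=\sum_{w\in\mathbb{Z}_\alpha\wr\mathfrak{S}_n^{0}}x^{d(w)}=A_n^\alpha(x),
\]
the last equality being the definition of $A_n^\alpha$ with the convention $\beta=0$.

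The main obstacle is purely bookkeeping: I must confirm that $H$ genuinely has order $\alpha$ (the generator has order $\alpha$ since $g^k=1^k2^k\cdots n^k$ is the identity exactly when $k\equiv 0\pmod\alpha$) and verify that right multiplication—rather than left—is the relevant action, which matters because Definition~\ref{grp_action} is not commutative. A quick check shows left multiplication by $g$ would also increment colors uniformly but permute positions according to the identity, giving the same effect here; nonetheless I would state the cosets as right cosets to match the formula and double-check the color-shift computation against Definition~\ref{grp_action} so that the ``fixed last color'' normalization lines up exactly with choosing the $c_n=0$ representative.
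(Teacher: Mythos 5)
Your proposal is correct and follows essentially the same route as the paper: identify the cosets of $H$ as global color shifts of a fixed underlying permutation, observe that the colored descent statistic is invariant under such shifts, and pick the unique representative with last color $0$ to biject with $\mathbb{Z}_\alpha\wr\mathfrak{S}_n^0$. The only cosmetic difference is that the paper verifies normality of $H$ via the matrix realization ($H$ consists of scalar matrices $\zeta^i I_n$), whereas you compute the right action directly from Definition~\ref{grp_action} and check that left multiplication agrees; both yield the same coset description.
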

\begin{proof}
The matrix corresponding to $w$ is the diagonal matrix $\zeta I_{n}$ of Remark~\ref{matrix}, so the subgroup generated by $w$ is the collection of diagonal matrices $\zeta^i I_{n}$ for $0\leq i\leq\alpha-1$. The subgroup $H$ is clearly normal, as it consists of scalar multiples of the identity matrix.

Cosets of $\mathbb{Z}_{\alpha}\wr\mathfrak{S}_n/H$ consist of a given colored permutation $y$ and all cylic shifts of the color vector of $y$. In other words, two colored permutations $y$ and $z$ are in the same coset of $H$ if $y$ and $z$ have the same underlying permutation $w_{1}\cdots w_{n}$ and their color vectors differ by a multiple of $(1,1,\dots,1)$. Notice that if $y$ and $z$ are in the same coset of $H$, then $d(y)=d(z)$, as cyclically shifting colors preserves color changes and leaves the underlying permutation unchanged.

As the descent number is well-defined over cosets of $\mathbb{Z}_{\alpha}\wr\mathfrak{S}_n/H$, by cyclic shifting we may choose a set of representatives such that each representative has last color $0$. Moreover, we obtain every possible colored permutation with last color $0$ as a representative in this way. Therefore, to compute the $\alpha$-colored Eulerian polynomial as above we can instead compute the sum over the quotient $\mathbb{Z}_{\alpha}\wr\mathfrak{S}_n/H$, that is, $A_n^{\alpha}(x)=\sum_{w\in\mathbb{Z}_\alpha\wr\mathfrak{S}_n/H}x^{d(w)}$.
\end{proof}

\section{The flag descent statistic}
\label{section_flag}
In this section, we explore a different notion of descent on $\mathbb{Z}_\alpha\wr\mathfrak{S}_n$, namely the flag descent statistic. We consider generalizations of Eulerian polynomials with respect to this statistic. 
\begin{definition}[Adin--Brenti--Roichman,~\cite{hyperoctahedral}; Bagno--Biagioli,~\cite{bagnobiagoli}]
\label{flag_des}
The {\it flag descent statistic} on $\mathbb{Z}_{\alpha} \wr \mathfrak{S}_{n}$ is given by 
\[
\flag(w)=\alpha\cdot|\{i:c_{i}=c_{i+1},w_{i}>w_{i+1}\}|+\alpha \cdot|\{i:c_{i}<c_{i+1}\}|+c_{1},
\]
where the addition of $c_{1}$ takes place in $\mathbb{Z}$ and the order on $\mathbb{Z}_{\alpha}$ is the linear order $0\prec 1\prec\cdots\prec\alpha-1$.
\end{definition}

\begin{example} In $\mathbb{Z}_{3} \wr \mathfrak{S}_{4}$, we have that $\flag{(4^{1}1^{1}3^{2}2^{0})}=3\cdot1+3\cdot1+1=7$.
\end{example}

The notion of flag descents is motivated by the notion of the {\it flag major index}, which can be defined from the structure of $\mathbb{Z}_\alpha\wr \mathfrak{S}_n$ viewed as a Coxeter group (e.g. see~\cite{flagmajor} for details).  Moreover, when $\alpha=1$, we have $\flag(\pi)=d(\pi)$, where $d(\pi)$ is the classical descent statistic.

We now define the \emph{flag Eulerian} polynomial in the obvious way.

\begin{definition}[Foata--Han,\cite{Foata}]
The \emph{flag Eulerian} polynomial over $\mathbb{Z}_\alpha\wr\mathfrak{S}_n$ is given by the sum 

\[\mathcal{W}_n^\alpha(x)=\sum_{\pi\in\mathbb{Z}_{\alpha}\wr\mathfrak{S}_n}x^{\flag(\pi)}.
\]
\end{definition}
\noindent The flag Eulerian polynomial is palindromic  which follows from Proposition 2.3 of~\cite{edgewise}. However, we are interested in the flag Eulerian polynomials computed over the quotient $\mathbb{Z}_\alpha\wr\mathfrak{S}_n/H$, which we define as
	\begin{equation}\label{Flag_Eulerian_Quotient_Poly}
  \mathcal{F}_n^\alpha(x)=\sum_{\pi\in\mathbb{Z}_{\alpha}\wr\mathfrak{S}_n/H}x^{\flag(\pi)}
    \end{equation} 
where we choose the coset representative with last color 0. We note that while there are known recurrences for the flag descent numbers over $\mathbb{Z}_\alpha\wr \mathfrak{S}_n$~\cite{Foata}, these recurrences do not adapt in an obvious way to elements with fixed last color. 
 
We now give a combinatorial proof that the polynomial $\mathcal{F}_n^\alpha(x)$ is palindromic. This polynomial has degree $\alpha\cdot(n-1)$, as the colored permutation $\pi=n^0(n-1)^0\cdots 1^0$ has maximal flag descent number $\flag(\pi)=\alpha\cdot(n-1)$. 
\begin{theorem} 
\label{main_result}
The flag descent statistic $\flag$ is symmetric about $\alpha\cdot(n-1)/2$ over $\mathbb{Z}_\alpha~\wr~\mathfrak{S}_{n}^{0}$. As a consequence $\mathcal{F}_n^\alpha(x)$ is a palindromic polynomial. 
\end{theorem}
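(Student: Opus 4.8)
The plan is to prove palindromicity by exhibiting an explicit involution on $\mathbb{Z}_\alpha\wr\mathfrak{S}_n^{0}$ that sends the statistic $\flag$ to its complement $\alpha(n-1)-\flag$. Concretely, I would set
\[
\phi\bigl(w_1^{c_1}w_2^{c_2}\cdots w_n^{c_n}\bigr)=(n+1-w_1)^{-c_1}(n+1-w_2)^{-c_2}\cdots(n+1-w_n)^{-c_n},
\]
where each color $-c_i$ is read modulo $\alpha$. The first two checks are routine: since $-0\equiv 0$, the map preserves the condition $c_n=0$ and hence lands back in $\mathbb{Z}_\alpha\wr\mathfrak{S}_n^{0}$, and since it complements values and negates colors coordinatewise it is visibly an involution. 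The real content is the identity $\flag(\phi(w))+\flag(w)=\alpha(n-1)$, which I would establish by comparing the three pieces of Definition~\ref{flag_des} position by position; an involution realizing this complement forces the multiset of $\flag$-values to be symmetric about $\alpha(n-1)/2$, which is exactly the palindromicity of $\mathcal{F}_n^\alpha(x)$.

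For the same-color part this is immediate: $c_i=c_{i+1}$ is preserved by negation, and complementing values turns $w_i>w_{i+1}$ into an ascent and vice versa, so each of the $E:=|\{i:c_i=c_{i+1}\}|$ equal-color adjacencies is a same-color descent for exactly one of $w,\phi(w)$ and contributes a total of $\alpha$ to the sum. Writing $K=(n-1)-E$ for the number of color-change adjacencies, the remaining task is to show that the color-ascent terms together with the first-color corrections contribute exactly $\alpha K$, that is, that $\alpha\bigl(U(w)+U(\phi(w))-K\bigr)+\bigl(c_1+\widetilde{c_1}\bigr)=0$, where $U$ denotes the number of strict color ascents and $\widetilde{c_1}=(-c_1)\bmod\alpha$.

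The delicate step, and the main obstacle, is that negation does not simply swap color ascents and descents: for a color-change pair with both colors nonzero one does have $c_i<c_{i+1}\iff -c_i>-c_{i+1}$, but the special color $0$ (fixed by negation) breaks this symmetry, so a pair $0\to c_{i+1}$ stays an ascent for both $w$ and $\phi(w)$, while a pair $c_i\to 0$ stays a descent for both. I would organize this bookkeeping as a winding count: classifying the color-change adjacencies by their interaction with the zero color shows that $U(w)+U(\phi(w))-K$ equals $-1$ when $c_1\neq 0$ and $0$ when $c_1=0$. This is precisely where the hypothesis $c_n=0$ enters, since it forces every maximal nonzero run of the color word to close off with a nonzero-to-zero transition, so that the excess of zero-to-nonzero over nonzero-to-zero transitions detects only whether $c_1\neq 0$. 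Since $c_1+\widetilde{c_1}$ equals $\alpha$ exactly when $c_1\neq 0$ and $0$ when $c_1=0$, the $\alpha$ coming from this correction cancels the $\alpha$ from $c_1+\widetilde{c_1}$ in both cases, yielding $\flag(\phi(w))+\flag(w)=\alpha(n-1)$ and hence the claim. I expect the colored winding number introduced in Section~\ref{section_flag} to be the invariant that packages this zero-color accounting cleanly.
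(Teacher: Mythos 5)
Your proof is correct, but it takes a genuinely different route from the paper's. The paper establishes the same complementation identity $\flag(w)+\flag(r(w))=\alpha(n-1)$ via the map $r(w_1^{c_1}\cdots w_n^{c_n})=w_n^{c_n-c_1}w_{n-1}^{c_{n-1}-c_1}\cdots w_1^{0}$, which reverses the underlying permutation and shifts every color by $-c_1$; it then reduces to colored permutations with no two equal adjacent colors by deleting same-color descents and inducting on $n$, and handles the remaining color-change count with the colored winding number (comparing counterclockwise and clockwise traversals of the color sequence around a clock). Your involution $\phi$, which complements values and negates colors while fixing positions, avoids both devices: the equal-color adjacencies are dispatched immediately because complementation flips the same-color descent condition, and the color-change bookkeeping reduces to the elementary fact that negation mod $\alpha$ reverses the linear order on the nonzero colors while fixing $0$. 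Your accounting checks out: writing $m$ for the number of changes between two nonzero colors, $a$ for transitions from color $0$ to a nonzero color, and $b$ for transitions from a nonzero color to $0$, one gets $U(w)+U(\phi(w))=m+2a=K+(a-b)$, and since $c_n=0$ forces every maximal nonzero run to close with a nonzero-to-zero transition, $a-b=-1$ precisely when $c_1\neq 0$, which exactly cancels $c_1+\widetilde{c_1}\in\{0,\alpha\}$. What each approach buys: yours is shorter, induction-free, and entirely elementary, while the paper's route introduces the colored winding number as a combinatorial tool of independent interest (it is highlighted in the abstract). One small note: your closing sentence is a red herring --- the run-counting you describe already completes the zero-color accounting, and the winding number is only needed for the paper's map, not for yours.
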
\label{symmetric}
\begin{proof}
To prove the symmetry of $\mathcal{F}_n^\alpha(x)$, we exhibit a bijection $r$ on the permutations in $\mathbb{Z}_\alpha\wr\mathfrak{S}_n^0$ satisfying  
\begin{equation}
\label{r_map_symmetric_equation}
\flag(w)+\flag(r(w))=\alpha\cdot(n-1).
\end{equation}
In the usual symmetric group, the reversal map applied to permutations in one-line notation proves that the Eulerian polynomials are palindromic. To show the $\mathcal{F}_n^\alpha(x)$ are palindromic, we will reverse the underlying permutations and cycle colors to guarantee the last color is $0$. That is, 
define a map $r: \mathbb{Z}_\alpha\wr\mathfrak{S}_{n}^{0} \longrightarrow \mathbb{Z}_\alpha\wr\mathfrak{S}_{n}^{0}$ by
$r(w_{1}^{c_{1}}\cdots w_{n}^{c_{n}})=w_{n}^{c_{n}-c_{1}}w_{n-1}^{c_{n-1}-c_{1}}\cdots w_{1}^{0}$, where the subtraction of $c_{1}$ takes place in $\mathbb{Z}_{\alpha}$. We claim that that equation~\eqref{r_map_symmetric_equation} holds for all $w \in \mathbb{Z}_\alpha\wr\mathfrak{S}_{n}^{0}$. Since $r$ is clearly bijective, this suffices to prove symmetry.

First, we reduce the problem to proving that equation~\eqref{r_map_symmetric_equation} holds for elements of $\mathbb{Z}_\alpha\wr\mathfrak{S}_n^0$ with no two equal adjacent colors. Given a colored permutation with $c_{i}=c_{i+1}$ and $w_{i}>w_{i+1}$, so that there is a descent at position $i$, we can remove that descent by deleting $w_{i}^{c_{i}}$ and relabeling $w'=w_{1}\cdots\widehat{w_{i}}\cdots w_{n}$ appropriately to obtain an element of $\mathbb{Z}_\alpha\wr\mathfrak{S}_{n-1}^{0}$ such that either $\flag(w')=\flag(w)-\alpha$ and $\flag(r(w'))=\flag(r(w))$ or $\flag(w')=\flag(w)$ and $\flag(r(w'))=\flag(r(w))-\alpha$. In either case, induction on $n$ gives that $\flag(w')+\flag(r(w'))=\alpha\cdot(n-2)$, and thus $\flag(w)+\flag(r(w))=\alpha\cdot(n-1)$ by the equalities of the previous sentence.

Since we have reduced to the case where no adjacent colors are equal, Definition~\ref{flag_des} reduces to \[\flag(w)=\alpha\cdot|\{i:c_i<c_{i+1}\}|+c_1.\]
With this form of the flag descent statistic, a routine calculation shows that establishing equation~\eqref{r_map_symmetric_equation} is equivalent to showing that
\begin{equation}
\label{second_r_symmetric_equation}
|\{i:c_{i}<c_{i+1}\}|+|\{i:c_{i+1}-c_{1}<c_{i}-c_{1}\}|=n-2.
\end{equation}
when $c_1\neq 0$ and 
\begin{equation}
\label{c1_zero_r_symmetric_equation}
|\{i:c_{i}<c_{i+1}\}|+|\{i:c_{i+1}<c_{i}\}|=n-1
\end{equation}
when $c_1=0$.

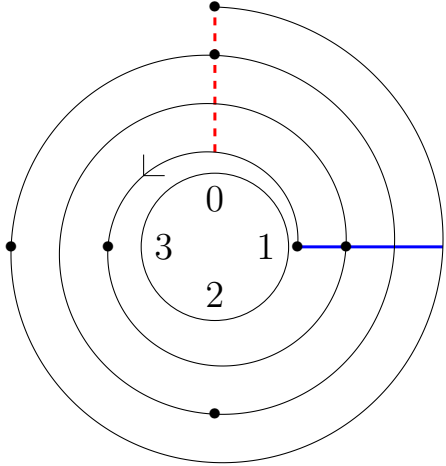
\begin{figure}[h!]
\label{clock_picture_counterclockwise}
\begin{center}
\begin{tikzpicture}[scale=.7]
\draw [color=red, dashed, line width=0.4mm](0,1.8) -- (0,4.56);
\draw [color=blue, line width=0.4mm](1.57,0) -- (4.33,0);
\node at (0,1.5)[label=below:\LARGE$0$]{};
\node at (1.5,0)[label=left:\LARGE$1$]{};
\node at (0,-1.5)[label=above:\LARGE$2$]{};
\node at (-1.5,0)[label=right:\LARGE$3$]{};
\node (1) at (1.57,0){$\bullet$};
\node (2) at (-2.03,0){$\bullet$};
\node (3) at (2.49,0){$\bullet$};
\node (4) at (0,-3.18){$\bullet$};
\node (5) at (0,3.64){$\bullet$};
\node (6) at (-3.87,0){$\bullet$};
\node (7) at (0,4.56){$\bullet$};

\draw (-1.354,1.754)--(-1.354,1.354)--(-.954,1.354);
\draw (0,0) circle (1.4cm);
 \draw [domain=0:(13*\PI)/2,variable=\t,smooth,samples=400]
        plot ({\t r}: {1.8+(2.3/(5*\PI))*(\t-\PI/2)});
\end{tikzpicture}
\caption{We compute the colored winding numbers $W((1,3,1,2,0,3,0),0)$ and $W((1,3,1,2,0,3,0),1)$ with colors in $\mathbb{Z}_4$. We begin at the first color $1$ and traverse the spiral from the inside out. The hand crosses the red dashed line and blue solid line four times each, so $W((1,3,1,2,0,3,0),0)=W((1,3,1,2,0,3,0),1)=3$.} 
\label{colored_winding_forward}
\end{center}
\end{figure}

We examine the cases where $c_1=0$ and when $c_1\neq 0$ separately.

\textbf{Case 1:} $c_{1}=0$. 

As our colored permutations $w$ have no equal adjacent colors, at each index $i$ we must have that $c_i<c_{i+1}$ or $c_{i+1}<c_i$, and thus equation~\eqref{c1_zero_r_symmetric_equation} holds.

\textbf{Case 2:} $c_{1}\neq0$. 

We show equation~\eqref{second_r_symmetric_equation} via a combinatorial interpretation. Consider a clock with a single hand and the $\alpha$ congruence classes $0, 1,\dots,\alpha-1$ marked on it clockwise in the usual cyclic order. Let $w_{1}^{c_{1}}\cdots w_{n}^{c_{n}}\in \mathbb{Z}_\alpha\wr\mathfrak{S}_n^0$ be a colored permutation such that $c_{i}\neq c_{i+1}$ for all $i$. With the hand starting at $c_{1}$, rotate it counterclockwise until it reaches $c_{2}$, then rotate it counterclockwise to $c_{3}$, and so forth until it has reached $c_{n}=0$. Now count the number of times it has rotated to or past $i \in \mathbb{Z}_\alpha$ and subtract 1. This is the \textit{colored winding number} $W(w,i)$ of the sequence $(c_{1},\cdots,c_{n})$. (Since the winding number depends only on the sequence of colors, we use $W(w,i)$ and $W((c_{1},c_{2},\cdots,c_{n}),i)$ interchangeably.) See Figure~\ref{colored_winding_forward} for an example.

The path traced out by the hand consists of an arc, or a spiral, see Figure~\ref{colored_winding_forward}, from $c_{1}$ to $0$. It is easy to see that the hand rotates to or past $0$ the same number of times that it rotates to or past~$c_{1}$, so $W(w,0)=W(w,c_{1})$. 

In addition, since there is a descent every time the hand passes 0, and every time the hand touches 0 (except the last time) there is a descent at the next step, the colored winding number counts colored descents. That is,
\begin{equation}
\label{winding_number_counts_color_descents_equation}
W(w,0)=|\{i:c_{i}<c_{i+1}\}|.
\end{equation}
For example, the colored winding number $W(w,0)$ of the sequence $(1, 3, 1, 2, 0, 3, 0)$ is 3: the hand passes 0 when it goes to 3, passes 0 when it goes to 2, touches 0, then touches 0 one more time. The hand is also at or passes 1 four times (it starts at 1, touches 1 again, passes 1 when it goes to 0 the first time, and passes 1 when it goes to 0 at the end) and the sequence increases three times: $1<3,1<2,0<3$. This is illustrated in Figure~\ref{colored_winding_forward}.

Now consider the colored winding number of $r(w_{1}^{c_{1}}\cdots w_{n}^{c_{n}})$, or the color sequence $(c_{n}-c_{1},c_{n-1}-c_{1},\dots,0)$, where we rotate the hand clockwise instead of counterclockwise. We denote this winding number by $W'(r(w),0)$. We compute $W'$ for the color vector $(1,3,1,2,0,3,0)$ in Figure~2. 
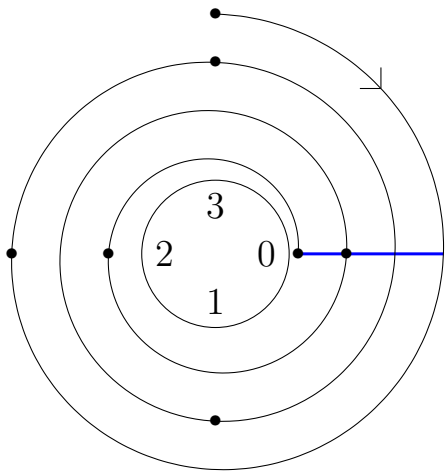
\begin{figure}[h]
\begin{center}
\begin{tikzpicture}[scale=.7] 
\draw [color=blue, line width=0.4mm](1.57,0) -- (4.33,0);
\node at (0,1.5)[label=below:\LARGE$3$]{};
\node at (1.5,0)[label=left:\LARGE$0$]{};
\node at (0,-1.5)[label=above:\LARGE$1$]{};
\node at (-1.5,0)[label=right:\LARGE$2$]{};
\node (1) at (1.57,0){$\bullet$};
\node (2) at (-2.03,0){$\bullet$};
\node (3) at (2.49,0){$\bullet$};
\node (4) at (0,-3.18){$\bullet$};
\node (5) at (0,3.64){$\bullet$};
\node (6) at (-3.87,0){$\bullet$};
\node (7) at (0,4.56){$\bullet$};

\draw (2.743,3.143)--(3.143,3.143);
\draw (3.143,3.143)--(3.143,3.543);
\draw (0,0) circle (1.4cm);
 \draw [domain=0:(13*\PI)/2,variable=\t,smooth,samples=200]
        plot ({\t r}: {1.8+(2.3/(5*\PI))*(\t-\PI/2)});
    
\end{tikzpicture}
\label{winding_backwards}
\caption{We compute the colored winding number $W'(r(1,3,1,2,0,3,0),0)=W'((3,2,3,1,0,2,0),0)$ with colors in $\mathbb{Z}_4$. The path taken by the hand is identical to the path taken in computing $W((1,3,1,2,0,3,0),1)$. Notice that we now begin at first color $3$ and traverse the spiral outside in. Additionally, notice the clock has been rotated by $1$ clockwise from the clock in Figure~\ref{colored_winding_forward}, see the paragraph preceding equation~\eqref{winding_reverse_winding_equality}.} 
\end{center}
\end{figure}

By equation~\eqref{winding_number_counts_color_descents_equation} we have that 
\[
W'(r(w),0)+1=|\{i:c_{i+1}-c_{1}>c_{i}-c_{1}\}|=n-1-|\{i:c_{i+1}-c_{1}<c_{i}-c_{1}\}|.
\]
Moreover, rotating the hand clockwise and taking the numbers in opposite order is exactly the procedure for computing $W(w,0)$ in reverse, except that we have introduced a shift, which is equivalent to rotating the labels on the clock clockwise by $c_{1}$. Therefore $W'(r(w),0)=W(w,c_{1})$. 

Thus we have that
\begin{equation}
\label{winding_reverse_winding_equality}
n-2-|\{i:c_{i+1}-c_{1}<c_{i}-c_{1}\}|=W'(r(w),0)=W(w,c_{1})=W(w,0)=|\{i:c_{i}<c_{i+1}\}|
\end{equation}
establishing equation~\eqref{second_r_symmetric_equation} and finishing the proof.
\end{proof}

\section{Concluding remarks and future directions}

We conclude this note with a collection of conjectures, observations, and open questions. In addition to the symmetric distribution of the flag descent statistic over the quotient $\mathbb{Z}_{\alpha}\wr \mathfrak{S}_n/H$, computational evidence motivates the following conjecture:
\begin{conjecture}\label{unimodality_conjecture}  The polynomial $\mathcal{F}_n^\alpha(x)$ is unimodal and palindromic.
\end{conjecture}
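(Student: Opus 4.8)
The plan is to reduce Conjecture~\ref{unimodality_conjecture} to an explicit product factorization of $\mathcal{F}_n^\alpha(x)$ and then to invoke closure of palindromic unimodal polynomials under multiplication. Theorem~\ref{main_result} already supplies palindromicity, so only unimodality remains; I would nonetheless prove both at once. Direct computation in small cases is very suggestive: one finds $\mathcal{F}_2^\alpha(x)=(1+x)(1+x+\cdots+x^{\alpha-1})$, $\mathcal{F}_3^2(x)=(1+x)^2A_3(x)$, and $\mathcal{F}_3^3(x)=(1+x+x^2)^2A_3(x)$, which point to the identity
\[
\mathcal{F}_n^\alpha(x)=\bigl(1+x+\cdots+x^{\alpha-1}\bigr)^{\,n-1}\,A_n(x),
\]
where $A_n$ is the classical Eulerian polynomial. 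This is consistent with every known invariant: the right-hand side has degree $(\alpha-1)(n-1)+(n-1)=\alpha(n-1)$ and is symmetric about $\alpha(n-1)/2$, exactly as in Theorem~\ref{main_result}, and its value at $x=1$ is $\alpha^{n-1}n!=\lvert\mathbb{Z}_\alpha\wr\mathfrak{S}_n^0\rvert$. Establishing this factorization is the heart of the argument.

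Granting the factorization, unimodality follows formally. The factor $1+x+\cdots+x^{\alpha-1}$ is palindromic with a constant, hence unimodal, coefficient sequence, so its $(n-1)$st power is palindromic and unimodal, being an iterated product of palindromic unimodal polynomials. The Eulerian polynomial $A_n(x)$ is palindromic, since the Eulerian numbers satisfy $A(n,k)=A(n,n-1-k)$, and is unimodal, since it is real-rooted. Finally, the product of two palindromic, unimodal polynomials with nonnegative coefficients is again palindromic and unimodal: every such polynomial is a nonnegative combination of balanced intervals $x^a+x^{a+1}+\cdots+x^b$ with $a+b$ constant, and a product of two balanced intervals is a symmetric trapezoidal sequence, hence palindromic and unimodal. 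Thus $\mathcal{F}_n^\alpha(x)$ is palindromic and unimodal, which is precisely Conjecture~\ref{unimodality_conjecture}.

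To prove the factorization I would condition on the underlying permutation $\sigma=w_1\cdots w_n\in\mathfrak{S}_n$ and sum over the color word $(c_1,\dots,c_{n-1})\in\mathbb{Z}_\alpha^{n-1}$ with $c_n=0$. Writing $\flag(w)=\alpha\cdot D(w)+c_1$, where a position $i$ contributes to $D(w)$ when $c_i<c_{i+1}$, or when $c_i=c_{i+1}$ and $w_i>w_{i+1}$, the inner sum over color words becomes a linear transfer-matrix product along the chain $c_n=0,c_{n-1},\dots,c_1$, with an $\alpha\times\alpha$ strict matrix $M^{<}$ at ascents of $\sigma$ and a weak matrix $M^{\le}$ at descents of $\sigma$, terminated by the boundary covector $(1,x,\dots,x^{\alpha-1})$ recording $x^{c_1}$. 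Summing over $\sigma$ then amounts to evaluating the noncommutative descent-indexed products $\sum_{\sigma\in\mathfrak{S}_n}\prod_i M^{(\cdot)}$, which I would reorganize via the descent-set enumeration $\sum_{S\subseteq[n-1]}\beta_n(S)\,(\cdots)$, where $\beta_n(S)$ is the number of permutations with descent set $S$. The goal is to show this mixed fine/coarse transfer sum collapses to $(1+x+\cdots+x^{\alpha-1})^{n-1}A_n(x)$; an appealing alternative is to verify that both sides satisfy the same recurrence in $n$, combining the Eulerian recursion for $A_n$ with the trivial recursion for the power of $1+x+\cdots+x^{\alpha-1}$.

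The main obstacle is exactly this factorization, and the difficulty is a grading mismatch. In $\flag(w)=\alpha\cdot D(w)+c_1$ the colored descents contribute in multiples of $\alpha$ while the first color $c_1$ contributes a residue in $\{0,\dots,\alpha-1\}$, whereas on the product side the exponents ${\rm des}(\sigma)$ are not confined to residues and the digit-sum exponents range up to $(\alpha-1)(n-1)$. Consequently there is no monomial-graded bijection matching $\flag$ with ${\rm des}(\sigma)+(c_1+\cdots)$ termwise; the equality can hold only after the coarse colored-descent contributions and the fine first-color contribution are redistributed against one another. Making this redistribution explicit---either as a genuine statistic-preserving bijection $\mathfrak{S}_n\times\{0,\dots,\alpha-1\}^{n-1}\to\mathbb{Z}_\alpha\wr\mathfrak{S}_n^0$, or as the transfer-matrix collapse above---is where essentially all the work lies, and once it is carried out both palindromicity and unimodality are automatic.
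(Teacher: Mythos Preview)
The paper does not prove this statement: Conjecture~\ref{unimodality_conjecture} is explicitly left open, the authors remarking that ``standard techniques for showing unimodality have not proven fruitful in the general setting,'' and even the special case~\eqref{concluding_remarks_conjecture} is recorded only as an observation. So there is no paper proof to compare against; anything you establish here goes strictly beyond the paper.

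As written, your proposal is a plan rather than a proof: you posit the factorization
\[
\mathcal{F}_n^\alpha(x)=\bigl(1+x+\cdots+x^{\alpha-1}\bigr)^{\,n-1}A_n(x)
\]
but leave it unproved, citing a ``grading mismatch'' as the main obstacle. That obstacle is illusory, and in fact your own transfer-matrix setup dissolves it in one line. Take the $\alpha\times\alpha$ matrices $M^{<}_{c,c'}=x^{\alpha[c<c']}$ at ascent positions of $\sigma$ and $M^{\le}_{c,c'}=x^{\alpha[c\le c']}$ at descent positions, together with the boundary row vector $v=(1,x,\dots,x^{\alpha-1})$ recording $x^{c_1}$. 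A direct computation gives, for each column index $j\in\{0,\dots,\alpha-1\}$,
\[
(vM^{<})_j=\sum_{i\ge j}x^{i}+\sum_{i<j}x^{i+\alpha}=x^{j}\,[\alpha]_x,\qquad
(vM^{\le})_j=\sum_{i>j}x^{i}+\sum_{i\le j}x^{i+\alpha}=x^{j+1}\,[\alpha]_x,
\]
where $[\alpha]_x=1+x+\cdots+x^{\alpha-1}$. Thus $v$ is a \emph{simultaneous left eigenvector}: $vM^{<}=[\alpha]_x\,v$ and $vM^{\le}=x\,[\alpha]_x\,v$. For a fixed underlying permutation $\sigma$ with ${\rm des}(\sigma)=k$ one therefore obtains
\[
\sum_{c_1,\dots,c_{n-1}\in\mathbb{Z}_\alpha}x^{\flag(w)}
\;=\;v\,M^{(1)}\cdots M^{(n-1)}\,e_0
\;=\;[\alpha]_x^{\,n-1}x^{k}\,(v\cdot e_0)
\;=\;[\alpha]_x^{\,n-1}x^{k},
\]
and summing over $\sigma\in\mathfrak{S}_n$ yields the factorization. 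Your subsequent deduction of palindromic unimodality from the product form is standard and correct. So, with this single eigenvector observation inserted, your plan becomes a complete proof of Conjecture~\ref{unimodality_conjecture}---something the paper does not supply---and it simultaneously settles equation~\eqref{concluding_remarks_conjecture} and Question~\ref{question_2_odd} for all $n$, not only odd $n$.
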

\noindent Symmetry follows immediately from Theorem~\ref{main_result}. However, standard techniques for showing unimodality have not proven fruitful in the general setting. In certain cases, these polynomials indeed appear to be real-rooted. 

In the specific case of $\alpha=2$ and $n=2k+1$, we have observed that 
\begin{equation}
\label{concluding_remarks_conjecture}
\mathcal{F}_{2k+1}^2(x)=(1+x)^{2k}\cdot A_{2k+1}(x).
\end{equation}
When $\alpha=2$, the generalized symmetric group  is known as the \emph{hyperoctahedral group} $B_n\cong\mathbb{Z}_2\wr\mathfrak{S}_n$. In~\cite{hyperoctahedral}, Adin, Brenti, and Roichman prove that the flag Eulerian polynomial $\mathcal{W}_n^2(x)$ computed over the entire hyperoctahedral group, with no restriction on the last color, satisfies the identity 
	\begin{equation}
    \mathcal{W}_n^2(x)=(1+x)^n \cdot A_n(x)
    \end{equation}
which bears a striking resemblance to  our observed equation~\eqref{concluding_remarks_conjecture}.

These observations lead to several open questions.
\begin{question}\label{question_2_odd}
Is there a combinatorial proof of equation  \eqref{concluding_remarks_conjecture}? In the absence of such a proof, is there a natural inductive proof?
\end{question}
For a combinatorial proof, the barred permutations, or ``balls in boxes", method of Petersen~\cite{Petersen_balls_boxes} may be a fruitful approach. By the work of Beck, Jayawant, and McAllister~\cite{Beck-FreeSum}, equation \eqref{concluding_remarks_conjecture} arises as the $h^{*}$-polynomial of the polytope obtained by the \emph{free sum} $[0,1]^{2k+1}\oplus \beta_{2k}$, where $\beta_{2k}$ is the cross polytope of dimension $2k$. Perhaps this Ehrhart theoretic connection may lead to a discrete geometric proof of the identity. 

A purely inductive proof of this identity has proven elusive due to the lack of a recurrence formula for flag descents in this quotient. Hence we ask the following:

\begin{question}
Is there a recurrence formula for $F(n,\alpha,k)=|\{\pi\in\mathbb{Z}_\alpha\wr\mathfrak{S}_n/H \, : \, \flag(\pi)=k \}|$? Are there recurrences for specific values of $\alpha$?
\end{question}
Recurrence formulas exist for the these values over the entire wreath product $\mathbb{Z}_\alpha\wr\mathfrak{S}_n$ (see~\cite{Foata}), but these formulas break down for computation over the quotient. A natural starting place would be to focus on the case of $\alpha=2$, as a formula in this case may lead to an inductive proof of equation~\eqref{concluding_remarks_conjecture} and new insight for Question~\ref{question_2_odd}.

We conclude with the following question:
\begin{question}
Can unimodality be proven via real-rootedness techniques for certain $\alpha$ to yield partial results for Conjecture~\ref{unimodality_conjecture}?
\end{question}
Showing unimodality via real-rootedness is a common technique in recent combinatorics (see e.g. ~\cite{Branden-realrooted,real_rooted_lecturehall}). In certain cases, we have observed these polynomials to be real-rooted. As such, these techniques may prove useful towards proving unimodality.

\bibliographystyle{plain}
\bibliography{colored_winding}

\begin{thebibliography}{10}

\bibitem{hyperoctahedral}
Ron~M. Adin, Francesco Brenti, and Yuval Roichman.
\newblock Descent numbers and major indices for the hyperoctahedral group.
\newblock {\em Adv. in Appl. Math.}, 27(2-3):210--224, 2001.
\newblock Special issue in honor of Dominique Foata's 65th birthday
  (Philadelphia, PA, 2000).

\bibitem{flagmajor}
Ron~M. Adin and Yuval Roichman.
\newblock The flag major index and group actions on polynomial rings.
\newblock {\em European J. Combin.}, 22(4):431--446, 2001.

\bibitem{edgewise}
Christos~A. Athanasiadis.
\newblock Edgewise subdivisions, local {$h$}-polynomials, and excedances in the
  wreath product {$\mathbb{Z}_r \wr \mathfrak{S}_n$}.
\newblock {\em SIAM J. Discrete Math.}, 28(3):1479--1492, 2014.

\bibitem{bagnobiagoli}
Eli Bagno and Riccardo Biagioli.
\newblock Colored-descent representations of complex reflection groups
  {$G(r,p,n)$}.
\newblock {\em Israel J. Math.}, 160:317--347, 2007.

\bibitem{Beck-FreeSum}
Matthias Beck, Pallavi Jayawant, and Tyrrell~B. McAllister.
\newblock Lattice-point generating functions for free sums of convex sets.
\newblock {\em J. Combin. Theory Ser. A}, 120(6):1246--1262, 2013.

\bibitem{Branden-realrooted}
Petter Br{\"a}nd{\'e}n.
\newblock Unimodality, log-concavity, real-rootedness and beyond.
\newblock In {\em Handbook of enumerative combinatorics}, Discrete Math. Appl.
  (Boca Raton), pages 437--483. CRC Press, Boca Raton, FL, 2015.

\bibitem{ER_r-cubical}
Richard Ehrenborg and Margaret Readdy.
\newblock The {$\bold r$}-cubical lattice and a generalization of the {${\bf
  cd}$}-index.
\newblock {\em European J. Combin.}, 17(8):709--725, 1996.

\bibitem{Foata}
Dominique Foata and Guo-Niu Han.
\newblock The flag-descent and -excedence numbers.
\newblock \url{http://www-irma.u-strasbg.fr/~foata/paper/pub116bTheFlag.pdf},
  November 2009.

\bibitem{hedmark}
Dustin {Hedmark}.
\newblock {Colored Eulerian polynomials and the colored permutohedron}.
\newblock arXiv:1605.08819, May 2016.

\bibitem{Petersen_balls_boxes}
T.~Kyle Petersen.
\newblock Two-sided {E}ulerian numbers via balls in boxes.
\newblock {\em Math. Mag.}, 86(3):159--176, 2013.

\bibitem{Petersen}
T.~Kyle Petersen.
\newblock {\em Eulerian numbers}.
\newblock Birkh\"auser Advanced Texts: Basler Lehrb\"ucher. [Birkh\"auser
  Advanced Texts: Basel Textbooks]. Birkh\"auser/Springer, New York, 2015.
\newblock With a foreword by Richard Stanley.

\bibitem{real_rooted_lecturehall}
Carla~D. Savage and Mirk{\'o} Visontai.
\newblock The {$\bold{s}$}-{E}ulerian polynomials have only real roots.
\newblock {\em Trans. Amer. Math. Soc.}, 367(2):1441--1466, 2015.

\bibitem{Steingrimmson_indexed}
Einar Steingr{\'{\i}}msson.
\newblock Permutation statistics of indexed permutations.
\newblock {\em European J. Combin.}, 15(2):187--205, 1994.

\end{thebibliography}

\end{document}